\numberwithin{equation}{section}
\newtheorem{theorem}{Theorem}[section]
\newtheorem{corollary}[theorem]{Corollary}
\newtheorem{conjecture}[theorem]{Conjecture}
\theoremstyle{definition}
\newtheorem{definition}[theorem]{Definition}
\newtheorem{question}[theorem]{Question}
\newtheorem{example}[theorem]{Example}
\newtheorem{remark}[theorem]{Remark}
\newcommand{\C}{\mathbb{C}}
\newcommand{\PP}{\mathbb{P}}
\def\CC{\mathbb{C}}
\def\RR{\mathbb{R}}
\def\sO{{\mathscr O}}
\newcommand{\cal}{\mathcal}
\def\cM{{\cal M}}
\def\cN{{\cal N}}
\def\sO{{\cal O}}
\def\cP{{\cal P}}
\def\cQ{{\cal Q}}
\def\fp{\mathfrak{p}}
\def\hbar{\overline{h}}
\def\Mbar{\overline{\cM}}
\def\mzn{\overline{\cM}_{0,n} }
\def\mzno{\overline{\cM}_{0,n+1} }
\def\SL{\mathrm{SL} }
\def\dim{\mathrm{dim} }
\def\ch{\mathrm{ch} }
\def\multiset#1#2{\ensuremath{\left(\kern-.3em\left(\genfrac{}{}{0pt}{}{#1}{#2}\right)\kern-.3em\right)}}
\def\multisetBig#1#2{\ensuremath{\Big(\kern-.3em\Big(\genfrac{}{}{0pt}{}{#1}{#2}\Big)\kern-.3em\Big)}}
\def\multisetbody#1#2{\ensuremath{\big(\kern-.3em\big(\genfrac{}{}{0pt}{}{#1}{#2}\big)\kern-.3em\big)}}
\def\and{\quad{\rm and}\quad}
\def\beq{\begin{equation}}
\def\eeq{\end{equation}}
\def\ben{\begin{enumerate}}
\def\een{\end{enumerate}}
\def\and{\quad\text{and}\quad}
\def\sfs{\mathsf{s}}
\def\stan{\mathscr{S}}
\def\charP{\mathscr{P}}
\def\charQ{\mathscr{Q}}
\def\symS{\mathbb{S}}
\def\fmn{\overline{\mathcal{M}}_{0,n}(\PP^1,1)}
\title{Asymptotic distribution of the Betti numbers of $\overline{\mathcal{M}}_{0,n}$}
\author{Jinwon Choi}
\address{Department of Mathematics and Research Institute of Natural Sciences, Sookmyung Women's University, Seoul 04310, Korea}
\email{jwchoi@sookmyung.ac.kr}
\author{Young-Hoon Kiem}
\address{School of Mathematics, Korea Institute for Advanced Study, 85 Hoegiro, Dongdaemun-gu, Seoul 02455, Korea}
\email{kiem@kias.re.kr}
\date{\today}
\begin{document}

\begin{abstract}
Asymptotic normality is frequently observed in large combinatorial structures, rigorously established for many quantities such as cycles or inversions in random permutations, the number of prime factors of random integers, and various parameters of random graphs. In this paper, we investigate whether this normal limit behavior extends to the topological invariants of geometric spaces. We show that the Betti numbers of the moduli space of rational curves with $n$ marked points $\mzn$ and the Fulton-MacPherson configuration space $\PP^1[n]$ are asymptotically normally distributed. Based on numerical evidence and established log-concavity, we conjecture that the Betti numbers of the quotients of these spaces by the symmetric group $\symS_n$ are also asymptotically normally distributed. In contrast, we provide examples of geometric spaces that do not follow this Gaussian law.
\end{abstract}

\maketitle

\section{Introduction}

The moduli space $\mzn$ of stable rational curves with $n$ marked points (cf. \cite{DM, Knu}) plays a central role in algebraic geometry and mathematical physics, serving as a fundamental object in subjects ranging from Gromov–Witten theory and quantum cohomology to operads and conformal field theory (cf. \cite{Manin}). Despite the significance of a complete understanding of its topological invariants, the overall topology of $\mzn$ is still far from fully understood. While Keel \cite{Keel} successfully described the cohomology ring and found recursive formulas for the Betti numbers, the combinatorial complexity of these invariants grows exponentially as $n$ gets large. Consequently, obtaining explicit descriptions for the Betti numbers for arbitrary $n$ is a highly nontrivial task.

To tackle this complexity, we turn our attention to the \emph{asymptotic behavior} of these invariants.  We investigate the limiting shape of the Betti numbers as the number of marked points tends to infinity. This approach is motivated by the phenomenon of \emph{asymptotic normality} in analytic combinatorics, where many combinatorial quantities tend to follow a normal distribution in the limit.

Using methods from analytic combinatorics, we analyze the generating functions of the Poincar\'e polynomials of $\mzn$. Our main result establishes that the sequence of Betti numbers of $\mzn$ is asymptotically normally distributed, meaning informally that, after appropriate normalization, its graph exhibits a shape close to that of a normal density function. This implies that the topology of $\mzn$ has a statistically predictable nature in the limit. Despite the exponential growth of the total cohomology, the distribution of its cohomology cycles mimics the behavior of sums of independent random variables. Using this result, we also establish the asymptotic log-concavity of the Betti sequence, strengthening prior work.

We further investigate whether this phenomenon of asymptotic normality extends to other fundamental moduli spaces. In particular, we demonstrate that the Fulton-MacPherson configuration space $\PP^1[n]$ exhibits asymptotic normality. We conjecture that the quotients $\mzn/\symS_n$, $\mzno/\symS_n$ and $\PP^1[n]/\symS_n$ also exhibit asymptotic normality. In contrast, the Hilbert scheme of points on a smooth projective surface and the GIT quotient $(\PP^1)^n/\!\!/\SL_2$ do not.
Consequently, the asymptotic normality is not a universal feature of all moduli spaces, but rather depends on the analytic nature of their generating functions. This distinction offers a novel approach to the classification and understanding of the topology of moduli spaces.

\medskip

\noindent \textbf{Acknowledgment.}
We thank Donggun Lee for his collaboration \cite{CKL1,CKL2,CKL3}. YHK thanks Professor Yujiro Kawamata for encouragement and Gergely B\'erczi for useful discussion.

\section{Preliminaries}
Throughout this paper, we focus on nonempty topological spaces $X_n$ whose total cohomology is finite-dimensional and odd-degree Betti numbers vanish. Let $b_{k,n} =\dim H^{2k}(X_n)$ be the even degree Betti number and let
\[ P_{X_n}(u) = \sum_k b_{k,n} u^k \]
be the corresponding Poincar\'{e} polynomial. The Euler characteristic is given by $\chi(X_n)=P_{X_n}(1)$. Since $\chi(X_n)>0$, the normalized polynomial 
\[f_n(u):=\frac{P_{X_n}(u)}{\chi(X_n)}=\sum_{k} \PP(\xi_n=k)u^k\]
can be viewed as a \emph{probability generating function} (PGF) of an associated integer-valued random variable $\xi_n$. We consider the asymptotic behavior of $\xi_n$ as $n\to \infty$.

For a probability generating function $f$ of an integer-valued random variable, it is straightforward that the mean and the variance are given by
\[\mathfrak{m}(f)=f'(1), \ \ \ \mathfrak{v}(f)= f''(1)+f'(1)-f'(1)^2 .\]
For a random variable $\xi$, its \emph{distribution function} $F_{\xi}(x)$ is defined by
\[F_{\xi}(x)=\PP(\xi \le x). \]
\begin{definition}
   Let $\xi$ be a continuous random variable with distribution function $F_{\xi}(x)$. A sequence of random variables $\xi_n$ with distribution function $F_{\xi_n}(x)$ is said to converge in distribution to $\xi$ if for each $x$,
    \[\lim_{n\to \infty} F_{\xi_n}(x) =F_{\xi}(x).\]
   In this case, we say the speed of convergence is $\epsilon_n$ if
   \[ \sup_{x\in \RR} |F_{\xi_n}(x) - F_{\xi}(x)|\le \epsilon_n. \]
\end{definition}

We say that the Betti numbers of $X_n$ are \emph{asymptotically normally distributed} if the corresponding standardized random variable $(\xi_n-\mathfrak{m}(f_n))/\sqrt{\mathfrak{v}(f_n)}$ converges in distribution to the standard normal distribution $\cN(0,1)$.

\begin{example}
Let $X$ be a topological space with finite-dimensional total cohomology and $H^{\text{odd}}(X)=0$. Let $f(u)=\frac{P_X(u)}{\chi(X)}$ be its normalized Poincar\'e polynomial. By the K\"unneth theorem, we have
\[ P_{X^n }(u) = P_X(u)^n, \  \chi(X^n)= \chi(X)^n. \]
So, the normalized Poincar\'e polynomial is $f_n(u)=f(u)^n$. Let $\xi$ and $\xi_n$ be the random variables associated to the PGF $f(u)$ and $f_n(u)$ respectively. Then $\xi_n$ is clearly the sum of $n$ independent copies of $\xi$. Hence by the Central Limit Theorem, the Betti numbers of $X^n$ are asymptotically normally distributed.
\end{example}

The key idea of the above example is generalized in the Quasi-Powers Theorem, which asserts that if the probability generating function essentially behaves like a power of a fixed function, then the corresponding coefficients are asymptotically normally distributed. This condition indicates that the underlying combinatorial invariant behaves like a sum of independent (or weakly dependent) random variables. In this sense, the Quasi-Powers Theorem can be regarded as a generalization of the classical Central Limit Theorem. In analytic combinatorics, asymptotic normality of the distribution encoded by $f_n(u)$ is often derived from the singularity analysis of the bivariate generating function \[\varphi(z, u)= \sum_{n\ge 0} f_n(u)z^n .\]
For a bivariate generating function $\varphi(z, u)$, we denote by $[z^n]\varphi(z,u)$ the coefficient of $z^n$ in $\varphi(z,u)$.

We state the Quasi-Powers Theorem. A comprehensive treatment can be found in \cite{FS}.

\begin{theorem}[{\cite[Theorem IX.8]{FS}}, Quasi-Powers Theorem] \label{thm:QPT}
    Let $\xi_n$ be non-negative integer-valued random variables with probability generating function $p_n(u)$. Suppose that there exist analytic functions $A(u)$, $B(u)$, independent of $n$ with $A(1)=B(1)=1$ and sequences $\beta_n, \kappa_n\to \infty$ such that
     \[p_n(u) =A(u)B(u)^{\beta_n} \left(1+O\left(\frac{1}{\kappa_n}\right)\right),\]
   uniformly in a fixed neighborhood $\Omega$ of $u=1$. Assume furthermore the variability condition
   \[ \mathfrak{v}(B(u))=B''(1) +B'(1)-B'(1)^2 \ne 0\]
   holds. Then, $\xi_n$ is asymptotically normally distributed. More precisely, with
   \beq\label{eq:mv}
   \begin{split}
    m_n &= \beta_n \mathfrak{m}(B(u))+\mathfrak{m}(A(u))+O(\kappa_n^{-1}) \\
    \sigma^2_n &= \beta_n \mathfrak{v}(B(u))+\mathfrak{v}(A(u))+O(\kappa_n^{-1}),
    \end{split}
    \eeq
    the random variable $\frac{\xi_n-m_n}{\sigma_n}$ converges in distribution to the standard normal distribution $\cN(0,1)$. The speed of convergence is $O(\kappa_n^{-1} + \beta_n^{-1/2})$.

\end{theorem}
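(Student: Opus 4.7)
The plan is to prove the theorem via the method of characteristic functions combined with the Berry-Esseen smoothing inequality, which simultaneously yields convergence in distribution and the claimed speed $O(\kappa_n^{-1}+\beta_n^{-1/2})$. Since $p_n(u)$ is the probability generating function of $\xi_n$, the characteristic function of the standardized variable $\xi_n^{\ast}:=(\xi_n-m_n)/\sigma_n$ is
\[
\phi_n(t)=e^{-itm_n/\sigma_n}\,p_n\!\left(e^{it/\sigma_n}\right),
\]
so the goal is to show $\phi_n(t)\to e^{-t^2/2}$ with explicit error, uniformly on a growing window of $t$.

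First I would linearize via logarithms. Because $A(1)=B(1)=1$ and $A,B$ are analytic, both are nonvanishing on a (possibly smaller) neighborhood of $u=1$ contained in $\Omega$, so the quasi-powers hypothesis can be rewritten there as
\[
\log p_n(u)=\log A(u)+\beta_n\log B(u)+O(\kappa_n^{-1}).
\]
I would then Taylor-expand $h(s):=\log B(e^{is})$ around $s=0$: a direct calculation from $B(1)=1$ gives $h(0)=0$, $h'(0)=i\,\mathfrak{m}(B)$ and $h''(0)=-\mathfrak{v}(B)$, and analogously for $\log A(e^{is})$. Substituting $s=t/\sigma_n$, the linear-in-$t$ contributions from $\beta_n h(t/\sigma_n)$ and $\log A(e^{it/\sigma_n})$ cancel against the factor $e^{-itm_n/\sigma_n}$, precisely because of the definitions of $m_n$ and $\sigma_n$ in \eqref{eq:mv}. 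The quadratic-in-$t$ term from $\beta_n h(t/\sigma_n)$ reduces to $-t^2/2$ modulo lower-order contributions, since $\beta_n \mathfrak{v}(B)/\sigma_n^2\to 1$ by the variability hypothesis $\mathfrak{v}(B)\neq 0$. Collecting Taylor remainders yields
\[
\phi_n(t)=e^{-t^2/2}\bigl(1+O(|t|^3\beta_n^{-1/2})+O(\kappa_n^{-1})\bigr)
\]
for $t$ in a suitable range.

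Next I would apply the Berry-Esseen smoothing inequality
\[
\sup_{x\in\RR}\bigl|F_{\xi_n^{\ast}}(x)-\Phi(x)\bigr|\le\frac{1}{\pi}\int_{-T}^{T}\left|\frac{\phi_n(t)-e^{-t^2/2}}{t}\right|dt+\frac{C}{T},
\]
with $\Phi$ the standard normal distribution function and $T$ chosen proportional to $\min(\sqrt{\beta_n},\kappa_n)$, so that $e^{it/\sigma_n}$ stays inside the neighborhood where the expansion above is valid. Splitting the integral into a near range $|t|\le 1$ contributing $O(\kappa_n^{-1})$, and an intermediate range $1\le|t|\le T$ controlled by the Gaussian factor $e^{-t^2/2}$, together with the $C/T$ tail term, one recovers the stated speed $O(\kappa_n^{-1}+\beta_n^{-1/2})$.

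The main obstacle is controlling $\phi_n(t)$ in the intermediate and outer range of $t$, where $e^{it/\sigma_n}$ wanders away from $u=1$: there one needs strict contraction $|B(e^{it/\sigma_n})|<1$, so that the large power $B^{\beta_n}$ does not spoil the Gaussian decay. After possibly shrinking the neighborhood, this follows from the expansion of $\log|B(e^{is})|$ whose leading quadratic term is $-\tfrac12\mathfrak{v}(B)\,s^2<0$ by the variability hypothesis. Quantifying this contraction uniformly, and matching it with Cauchy-type estimates for the higher-order Taylor remainders of $\log A$ and $\log B$ on $\Omega$, is the technically delicate step on which the whole argument rests.
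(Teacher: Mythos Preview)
The paper does not give its own proof of this theorem: it is stated as a black-box citation to \cite[Theorem IX.8]{FS} and then applied in Sections~\ref{sec:mzn} and~\ref{sec:FM}. So there is no in-paper argument to compare against.

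That said, your sketch is essentially the proof one finds in \cite{FS}: pass to the characteristic function $\phi_n(t)=e^{-itm_n/\sigma_n}p_n(e^{it/\sigma_n})$, take logarithms using the quasi-powers hypothesis, Taylor-expand $\log B(e^{is})$ and $\log A(e^{is})$ at $s=0$ to isolate the Gaussian term $e^{-t^2/2}$, and then feed the resulting pointwise bound into the Berry--Esseen smoothing inequality with a cutoff $T$ of order $\min(\sqrt{\beta_n},\kappa_n)$. Your identification of the derivatives $h'(0)=i\,\mathfrak{m}(B)$, $h''(0)=-\mathfrak{v}(B)$ and of the cancellation mechanism against $e^{-itm_n/\sigma_n}$ is correct, as is the role of the variability condition $\mathfrak{v}(B)\neq 0$ in ensuring $|B(e^{is})|<1$ for small $s\neq 0$. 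One minor clarification: in the standard argument the cutoff $T$ is chosen so that $e^{it/\sigma_n}$ remains inside the fixed neighborhood $\Omega$ for all $|t|\le T$, which already forces $T=O(\sigma_n)=O(\sqrt{\beta_n})$; the ``outer range'' contraction you mention is then not needed, since the Berry--Esseen tail $C/T$ absorbs everything beyond $T$. With that adjustment your outline is complete and matches the reference.
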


Whereas the Quasi-Powers Theorem provides distributional asymptotic normality, obtaining estimates for individual coefficients requires a local limit theorem.

\begin{theorem}[{\cite[Theorem IX.14]{FS}}, Local limit theorem] \label{thm:LLT}
    Assume the hypothesis of the Quasi-Powers Theorem for $p_n(u)=\sum_k p_{k,n}u^k$. Assume in addition that
    \[ |p_n(u)|\le K^{-\beta_n} \]
    for some $K>1$ and all $u$ in $\CC-\Omega$ with $|u|=1$.  Then,
    \[ \lim_{n\to \infty} \sup_{x\in \RR} \left| \sigma_n p_{\lfloor m_n +x\sigma_n \rfloor,n} - \frac{1}{\sqrt{2\pi}}e^{-x^2/2} \right| =0.\]
\end{theorem}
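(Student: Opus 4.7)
The plan is to extract the coefficient $p_{k,n}$ by Cauchy's formula on the unit circle and perform a saddle-point analysis with saddle at $u=1$:
\[
p_{k,n} \;=\; \frac{1}{2\pi}\int_{-\pi}^{\pi} p_n(e^{i\theta})\, e^{-ik\theta}\,d\theta,
\]
with $k=\lfloor m_n + x\sigma_n\rfloor$. Split the integration range into a ``central'' arc on which $e^{i\theta}\in\Omega$ and a ``tail'' arc on which $e^{i\theta}\notin\Omega$; the new hypothesis $|p_n(u)|\le K^{-\beta_n}$ off $\Omega$ is precisely what controls the tail, while Quasi-Powers controls the central arc.

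For the tail, the assumed bound gives
\[
\left|\int_{\mathrm{tail}} p_n(e^{i\theta})\, e^{-ik\theta}\, d\theta\right| \;\le\; 2\pi\, K^{-\beta_n},
\]
which, multiplied by $\sigma_n=O(\beta_n^{1/2})$, tends to $0$ uniformly in $k$. For the central arc, substitute $p_n(u)=A(u)B(u)^{\beta_n}(1+O(\kappa_n^{-1}))$ and shrink further to $|\theta|\le \delta_n$ with, say, $\delta_n=\beta_n^{-2/5}$. Using $B(1)=1$ together with the identities $B'(1)=\mathfrak{m}(B)$ and $B''(1)+B'(1)-B'(1)^2=\mathfrak{v}(B)$, a Taylor expansion yields
\[
\log B(e^{i\theta})=i\theta\,\mathfrak{m}(B)-\tfrac{1}{2}\theta^{2}\,\mathfrak{v}(B)+O(|\theta|^{3}),
\]
and $A(e^{i\theta})=1+O(|\theta|)$. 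Writing $k=m_n+x\sigma_n+\eta$ with $|\eta|\le 1$ (the floor discrepancy) and invoking the asymptotics \eqref{eq:mv} for $m_n$ and $\sigma_n^2$, the phase reduces to
\[
\beta_n\log B(e^{i\theta})-ik\theta \;=\; -\tfrac{1}{2}\sigma_n^{2}\theta^{2} \;-\; ix\sigma_n\theta \;+\; o(1).
\]
Rescaling $t=\sigma_n\theta$ and extending the integral to $\mathbb{R}$ produces
\[
\sigma_n\int_{|\theta|\le\delta_n}p_n(e^{i\theta})\,e^{-ik\theta}\,d\theta \;\longrightarrow\; \int_{\mathbb{R}}e^{-t^{2}/2-ixt}\,dt \;=\; \sqrt{2\pi}\,e^{-x^{2}/2}.
\]
The intermediate annulus $\delta_n\le|\theta|\le\theta_0$ is dispatched by the quadratic estimate $|B(e^{i\theta})|^{\beta_n}\le e^{-c\beta_n\theta^{2}}$ near $\theta=0$, which is valid because $\mathfrak{v}(B)\ne 0$, giving a contribution of order $e^{-c\beta_n^{1/5}}$.

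The main technical obstacle is upgrading this pointwise-in-$x$ estimate to the uniform supremum over all $x\in\mathbb{R}$. On any bounded interval $|x|\le R$, the Taylor remainders and Gaussian truncation can be controlled uniformly, so the only subtle regime is $|x|\to\infty$ jointly with $n$. There both sides must be shown small: the limit $e^{-x^{2}/2}$ trivially, and $\sigma_n p_{k,n}$ via a Chebyshev-type tail bound on the standardized variable $(\xi_n-m_n)/\sigma_n$ (whose second moment is bounded by the Quasi-Powers asymptotics). Choosing a threshold such as $|x|=\sqrt{3\log\sigma_n}$ separates the two regimes and, combined with the central saddle-point estimate on the bounded range, delivers the uniform conclusion $\sup_{x\in\mathbb{R}}|\sigma_n p_{\lfloor m_n+x\sigma_n\rfloor,n}-(2\pi)^{-1/2}e^{-x^{2}/2}|\to 0$.
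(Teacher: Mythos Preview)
The paper does not prove this statement; it is quoted from Flajolet--Sedgewick \cite[Theorem~IX.14]{FS} as a tool, with no argument supplied. Your sketch is essentially the proof given in that reference: Cauchy's formula on the unit circle, the hypothesis $|p_n(u)|\le K^{-\beta_n}$ killing the arc outside $\Omega$, and a Gaussian saddle-point analysis near $\theta=0$ after substituting the quasi-power form and expanding $\log B(e^{i\theta})$. So there is nothing in the paper itself to compare against, and your outline matches the standard source.

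One remark on the last paragraph: the separate treatment of $|x|\to\infty$ is both unnecessary and, as written, not quite right. It is unnecessary because every error term in the saddle-point computation---the tail integral, the intermediate annulus, the Taylor remainder $e^{R_n(\theta)}-1$, the $A(e^{i\theta})-1$ and $O(\kappa_n^{-1})$ corrections, and the truncation of the limiting Gaussian integral---is bounded by quantities of the form $\int|p_n(e^{i\theta})|\,d\theta$ or $\int e^{-t^2/2}\,dt$ that do not involve $k$ at all, since $|e^{-ik\theta}|=1$. Hence the convergence of $\sigma_n p_{k,n}$ to $(2\pi)^{-1/2}e^{-x^2/2}$ is already uniform in $x\in\mathbb{R}$. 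Moreover, a Chebyshev-type bound controls only the \emph{sum} $\sum_{|k-m_n|>R\sigma_n}p_{k,n}\le C/R^2$, not the individual scaled term $\sigma_n p_{k,n}$, so that step would not deliver what you need if it were actually required.
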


\begin{example}
   Another motivating example is the complete flag variety $\mathrm{Fl}_n$. It is well known that the Poincaré polynomial of $\mathrm{Fl}_n$ coincides with the generating function for the number of inversions in permutations of the symmetric group $\symS_n$ (cf. \cite{Fulton}). As established in \cite{Hwang, Mar} via the Quasi-Powers Theorem, the distribution of inversions converges to a normal distribution as $n$ tends to infinity. Consequently, the Betti numbers of the complete flag varieties are asymptotically normally distributed as $n$ grows.
\end{example}

\section{Moduli space of rational curves with $n$ marked points}\label{sec:mzn}

Let $\mzn$ be the moduli space of rational curves with $n$ marked points. We define $b_{k,n} =\dim H^{2k}(\mzn)$ and
\beq\label{0}\varphi(z,u)=z+\sum_{n\ge 2}\frac{z^n}{n!}\sum_{k=0}^{n-2}b_{k, n+1} u^k. \eeq

It is well known that $\varphi$ satisfies the functional equation (cf. \cite[p.194 Theorem 4.2.1]{Manin})
\beq \label{eq:varphi} (1+\varphi)^u = u^2 \varphi -u(u-1)z +1.
\eeq
Hence, for a fixed value of $u$ (assuming $u \ne 0, 1$), $\varphi$ can be regarded as the inverse of
\beq\label{1} z=\frac{u^2\varphi-(1+\varphi)^u+1}{u(u-1)}.\eeq

By the Inverse Function Theorem, singularities of $\varphi$ can only occur when the derivative $ \frac{\partial z}{\partial \varphi}$ vanishes:
\[\frac{\partial z}{\partial \varphi}= \frac{u^2-u(1+\varphi)^{u-1}}{u(u-1)}=0.  \]
Thus, we find that the location of singularity is
\beq\label{eq:sing} z= \rho(u) := u^{\frac{1}{u-1}}- \frac{u+1}{u}, \hspace{1em} \varphi = \lambda(u):= u^\frac{1}{u-1}-1. \eeq
The second derivative at the singularity \eqref{eq:sing} is
\beq\label{eq:phi''} \frac{\partial^2 z}{\partial \varphi^2}|_{\varphi=\lambda(u)}= - u^{\frac{u-2}{u-1}}.  \eeq
Since this is nonzero, the singularity is of the square-root type (cf. \cite[\S IX.7.3.]{FS}).
By the Taylor expansion, we have
\[
\begin{split}
   z-\rho(u) & =  -\frac12 u^{\frac{u-2}{u-1}} (\varphi - \lambda(u))^2+O\left(|\varphi-\lambda(u)|^3\right) \\
    & = -\frac12 u^{\frac{u-2}{u-1}} (\varphi - \lambda(u))^2\left(1+O\left(|\varphi-\lambda(u)|\right)\right).
\end{split}
 \]
Inverting this local relation gives
\beq \label{eq:phiexp}
\begin{split}
\varphi - \lambda(u) & =  - \sqrt{2} u^{-\frac{u-2}{2(u-1)}} \rho(u)^\frac12 \left(1 - \frac{z}{\rho(u)}\right)^\frac12 \\ &
+ C(u)\left(1 - \frac{z}{\rho(u)}\right)+O\left(\left|1 - \frac{z}{\rho(u)}\right|^{\frac{3}{2}}\right)
\end{split}
\eeq
for some $C(u)$. By \cite[Theorem VI.4]{FS}, the approximation of $\varphi$ near the singularity transfers to an asymptotic estimate of the coefficients:
\begin{align}
  [z^n]\varphi(z, u)  & = -\sqrt{2} u^{-\frac{u-2}{2(u-1)}} \rho(u)^{\frac12 -n} (-1)^n \binom{\frac{1}{2}}{n} + O\left(\rho(u)^{-n}n^{-\frac{5}{2}}\right)  \notag \\
   & = \frac{1}{\sqrt{2\pi}}u^{-\frac{u-2}{2(u-1)}}\rho(u)^{\frac12 -n} n^{-\frac{3}{2}} + O\left(\rho(u)^{-n}n^{-\frac{5}{2}}\right). \label{eq:varphiasymp}
\end{align}
Note that the linear error term $(1-z/\rho(u))$ in \eqref{eq:phiexp} does not contribute to the asymptotics of coefficients. The factor $n^{-\frac{5}{2}}$ is from the next contributing error term $(1-z/\rho(u))^{\frac{3}{2}}$. Thus, we obtain
\beq\label{eq:asym}
\frac{[z^n]\varphi(z, u)}{[z^n]\varphi(z, 1)}   = \frac{u^{-\frac{u-2}{2(u-1)}}}{\sqrt{e}}\left(\frac{\rho(u)}{e-2}\right)^{\frac12-n}\left( 1 + O\left(n^{-1}\right)\right).
\eeq

Applying Theorem \ref{thm:QPT} to \eqref{eq:asym} with $$A(u)=\frac{u^{-\frac{u-2}{2(u-1)}}}{\sqrt{e}}\left(\frac{\rho(u)}{e-2}\right)^{\frac12}, \ \  B(u)=\frac{e-2}{\rho(u)},$$ we arrive at the following theorem.

\begin{theorem}\label{thm:mzn}
  The Betti numbers of $\mzn$ are asymptotically normally distributed with the mean $m_n=\frac{n-3}{2}$ and the variance   \beq\label{eq:mznvar} \sigma_n^2= \frac{3-e}{6(e-2)}n+ \frac{11-4e}{24-12e} + O(n^{-1}).\eeq
  The speed of convergence is $O(n^{-\frac12}).$
\end{theorem}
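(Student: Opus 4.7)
The strategy is to apply the Quasi-Powers Theorem (Theorem~\ref{thm:QPT}) directly to the asymptotic formula \eqref{eq:asym}, which the preceding paragraphs have already established by singularity analysis of the bivariate generating function $\varphi(z,u)$. By the definition \eqref{0}, $[z^n]\varphi(z,u) = \tfrac{1}{n!}P_{\overline{\mathcal{M}}_{0,n+1}}(u)$ for $n \ge 2$, so the ratio appearing in \eqref{eq:asym} is precisely the PGF $p_n(u)$ of the Betti distribution of $\overline{\mathcal{M}}_{0,n+1}$. Relation \eqref{eq:asym} therefore takes the Quasi-Powers form $p_n(u) = A(u)\,B(u)^n\,(1 + O(n^{-1}))$ with $\beta_n = \kappa_n = n$, and all that remains is to verify the hypotheses of Theorem~\ref{thm:QPT} and to compute the mean and variance of $A$ and $B$.

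First I would check $A(1) = B(1) = 1$. The elementary limit $\lim_{u\to 1} u^{1/(u-1)} = e$ gives $\rho(1) = e-2$, hence $B(1) = 1$; expanding the exponent $-\tfrac{u-2}{2(u-1)}\log u$ at $u = 1$ shows that $\lim_{u\to 1} u^{-(u-2)/(2(u-1))} = \sqrt{e}$, so $A(1) = 1$ as well.

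Next I would perform the Taylor expansion of $B$ and $A$ around $u = 1$. With $t = u-1$, the key input is
\[ u^{1/(u-1)} \;=\; \exp\!\left(\frac{\log(1+t)}{t}\right) \;=\; e\left(1 - \tfrac{t}{2} + \tfrac{11}{24}t^2 + O(t^3)\right). \]
Substituting this into $\rho(u) = u^{1/(u-1)} - 1 - 1/u$ and then into $B(u) = (e-2)/\rho(u)$ yields
\[ B(u) \;=\; 1 + \tfrac{t}{2} + \frac{12 - 5e}{24(e-2)}\,t^2 + O(t^3), \]
whence $\mathfrak{m}(B(u)) = \tfrac12$ and $\mathfrak{v}(B(u)) = \tfrac{3-e}{6(e-2)}$. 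Since $e < 3$, the latter is strictly positive, so the variability hypothesis of Theorem~\ref{thm:QPT} is satisfied. A parallel (slightly longer) expansion of $\log A(u)$ at $u = 1$ produces the finite constants $\mathfrak{m}(A(u))$ and $\mathfrak{v}(A(u))$.

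Plugging these values into \eqref{eq:mv} and then reindexing from the coefficient index $n$ of $\varphi$ to the marked-point index $n+1$ of the moduli space gives the mean $m_n = \tfrac{n-3}{2}$ and the stated variance \eqref{eq:mznvar}, while Theorem~\ref{thm:QPT} also yields the convergence rate $O(\kappa_n^{-1} + \beta_n^{-1/2}) = O(n^{-1/2})$. The main obstacle is the bookkeeping at the removable singularity $u = 1$: the function $u^{1/(u-1)}$ is not elementary there, so obtaining the subleading constants in $m_n$ and $\sigma_n^2$ requires a careful second-order expansion with several cancellations among the factors of $(e-2)^{-1}$, together with attention to how the implicit $O(n^{-1})$ term in \eqref{eq:asym} interacts with the shift from $n$ to $n+1$.
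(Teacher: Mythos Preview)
Your proposal is correct and follows exactly the paper's approach: apply Theorem~\ref{thm:QPT} to \eqref{eq:asym} with $A(u)=\dfrac{u^{-\frac{u-2}{2(u-1)}}}{\sqrt{e}}\Bigl(\dfrac{\rho(u)}{e-2}\Bigr)^{1/2}$ and $B(u)=\dfrac{e-2}{\rho(u)}$, $\beta_n=\kappa_n=n$. You simply supply the Taylor expansions and the reindexing $n\mapsto n+1$ that the paper leaves implicit; your values $\mathfrak m(B)=\tfrac12$ and $\mathfrak v(B)=\tfrac{3-e}{6(e-2)}$ check out, and the exact mean $\tfrac{n-3}{2}$ (rather than merely its asymptotic) can be read off directly from Poincar\'e duality.
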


\begin{example}

\begin{figure}
    \centering
    \includegraphics[width=0.7\columnwidth]{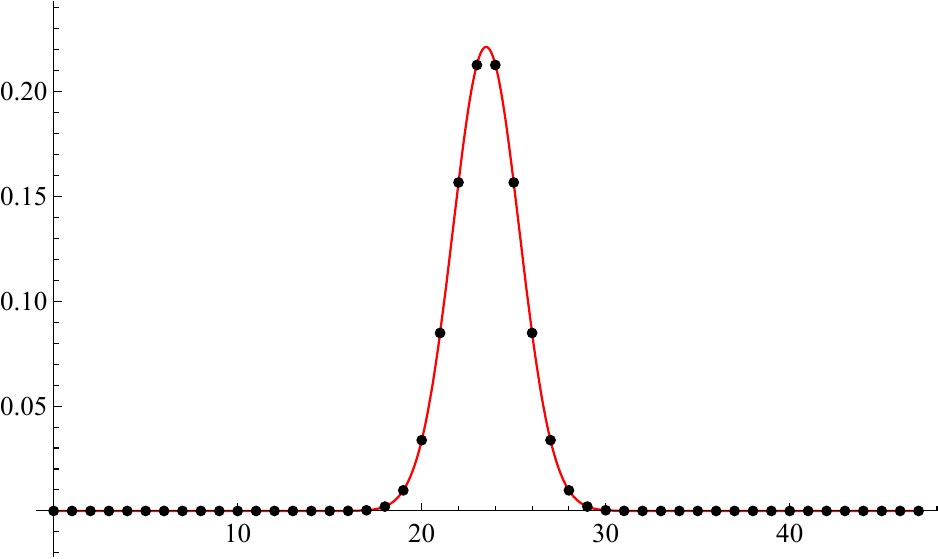}
    \caption{Betti distribution for $\overline{\cM}_{0,50}$}
    \label{fig:mzn}
\end{figure}

Figure \ref{fig:mzn} shows the normalized Betti numbers (black dots) of $\overline{\cM}_{0,50}$ together with the probability density function of the corresponding normal distribution (red curve). The close agreement indicates that the normal density function provides an effective approximation of the Betti numbers, particularly for those in the middle degrees. For instance, the relative estimation error for the middle Betti number $b_{23,50}$ is approximately $0.13\%$.
\end{example}

An application of Theorem \ref{thm:mzn} is that the Betti sequence of $\mzn$ is asymptotically log-concave, which extends the results in \cite{ACM,CKL2}.

\begin{definition}
\begin{enumerate}
  \item A sequence $\{a_k\}_{k=0}^n$ is called \emph{log-concave} if \[a_k^2 \ge a_{k-1}a_{k+1}\] holds for all $1 \le k \le n-1$.
  \item A sequence $\{a_k\}_{k=0}^n$ is called \emph{$r$-ultra-log-concave} at $k$ if \[\left(\frac{a_k}{\binom{n}{k}^r} \right)^2\ge \left(\frac{a_{k-1}}{\binom{n}{k-1}^r } \right)\left(\frac{a_{k+1}}{\binom{n}{k+1}^r} \right)\] holds.
\end{enumerate}
\end{definition}

The variance in \eqref{eq:mznvar} of the distribution of the Betti numbers of $\mzn$ grows asymptotically at the linear rate $v= \frac{3-e}{6(e-2)} \approx 0.06537$. In comparison, the variance of the binomial coefficients $\binom{n}{k}$, interpreted as a probability distribution, grows at the rate $\frac{1}{4}$.
We derive the following result from the inequality $v < \frac{1}{4r}$ for $r=3$. We restrict our attention to the central region, since the Gaussian approximation used in Theorem \ref{thm:mzn} is valid in this region by the local limit theorem (Theorem \ref{thm:LLT}).

\begin{corollary}
  The sequence of Betti numbers of $\mzn$ is asymptotically $3$-ultra-log-concave at $k$ for $|k-\frac{n-3}{2}|=O(\sqrt{n})$.
\end{corollary}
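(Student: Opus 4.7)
The plan is to derive the $3$-ULC inequality from a quantitative Gaussian approximation of $b_{k,n}$ in the central region, combined with the strict numerical inequality $v<1/(4r)$ for $r=3$, where $v=\tfrac{3-e}{6(e-2)}\approx 0.0654$ while $1/(4r)=1/12\approx 0.0833$.

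First, I would apply the local limit theorem (Theorem \ref{thm:LLT}) to the probability generating function $f_n(u)=P_{\mzn}(u)/\chi(\mzn)$, with $A(u),B(u)$ as in the proof of Theorem \ref{thm:mzn}. The hypothesis $|f_n(u)|\le K^{-n}$ on $|u|=1$ outside a small neighborhood of $u=1$ reduces to $|\rho(u)|>e-2$ there, which follows from the explicit formula \eqref{eq:sing} by standard analytic arguments. The theorem then yields
\[b_{k,n}=\frac{\chi(\mzn)}{\sigma_n\sqrt{2\pi}}\,e^{-x_k^{2}/2}\bigl(1+\epsilon_{k,n}\bigr),\qquad x_k=\tfrac{k-m_n}{\sigma_n},\]
uniformly for $|x_k|\le C$. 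For the comparison below I need the refined rate $\epsilon_{k,n}=o(1/n)$, which follows by pushing the singularity expansion \eqref{eq:phiexp} one further term (to order $(1-z/\rho)^{5/2}$) and carrying out a quantitative saddle-point extraction of $[u^k]f_n(u)$.

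Second, the discrete second derivative of $-\tfrac12 x_k^{2}$ gives, via this Gaussian approximation,
\[\log\frac{b_{k,n}^{\,2}}{b_{k-1,n}\,b_{k+1,n}}=\frac{1}{\sigma_n^{2}}+o(1/n)=\frac{1}{vn}+o(1/n),\]
using $\sigma_n^{2}=vn+O(1)$ from \eqref{eq:mznvar}. On the binomial side, $(n-k+1)(k+1)-k(n-k)=n+1$ and $k(n-k)=n^{2}/4+O(n)$ in the regime $|k-(n-3)/2|=O(\sqrt n)$, so
\[r\log\frac{\binom{n}{k}^{2}}{\binom{n}{k-1}\binom{n}{k+1}}=r\log\!\left(1+\frac{n+1}{k(n-k)}\right)=\frac{4r}{n}+O(n^{-2}).\]
Subtracting, the logarithm of the $r$-ULC ratio equals $\tfrac{1/v-4r}{n}+o(1/n)$. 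For $r=3$ we have $1/v\approx 15.29>12=4r$, so this difference is strictly positive for all $n$ large, proving the 3-ULC inequality.

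The main obstacle is the quantitative upgrade of the local limit theorem. As stated, Theorem \ref{thm:LLT} yields only $o(1)$ convergence, whereas to dominate the explicit leading contribution $(1/v-4r)/n$ one needs an $o(1/n)$ rate (in a form sufficiently smooth in $k$ that its discrete second difference does not spoil the bound). Establishing this rate via the refined singularity expansion is the technical heart of the argument; once it is in hand, the remaining comparison reduces to the routine Taylor expansions above.
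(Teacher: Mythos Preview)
Your approach is essentially the paper's: verify the local limit theorem (via the unique minimum of $|\rho(u)|$ on $|u|=1$ at $u=1$), approximate $b_{k,n}$ by a Gaussian in the central window, and conclude $3$-ultra-log-concavity from the numerical inequality $v<1/12$. The paper packages the comparison slightly differently---it forms the ratio $b_{k,n}/\binom{n-3}{k}^{r}$, observes that its Gaussian approximation has effective inverse variance $\sigma_n^{-2}-r\,s_n^{-2}>0$ (with $s_n^2=(n-3)/4$), and invokes log-concavity of Gaussians---rather than computing discrete second differences as you do; but the two calculations are equivalent and rest on the same inequality $r<1/(4v)$.

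Your self-identified obstacle, the need for an $o(1/n)$ relative error so that it does not swamp the $O(1/n)$ log-concavity margin, is genuine, and the paper's proof does not address it: the paper passes from the asymptotic relation $b_{k,n}/\binom{n-3}{k}^r \sim C_n\exp\bigl(-\tfrac12(\sigma_n^{-2}-r s_n^{-2})(k-m_n)^2\bigr)$ directly to log-concavity without controlling how the $o(1)$ multiplicative error behaves under the second difference in $k$. So you have not missed a trick that the paper supplies; rather, you have been more explicit about a gap that the paper leaves implicit (or, alternatively, the paper is reading ``asymptotically $3$-ultra-log-concave'' as a statement about the leading Gaussian approximation rather than about the actual sequence for large $n$).
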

\begin{proof}
  We first check that the local limit theorem holds for Betti numbers of $\mzn$. Let \[\varphi_n(u)=[z^{n}]\varphi(z, u)=\sum_{k=0}^{n-2}b_{k, n+1} u^k.\] By previous discussion, we find that the radius of convergence of $\varphi(u)$ is $|\rho(u)|$ and it is elementary to check that $|\rho(u)|$ attains its minimum uniquely at $u=1$ for $|u|=1$. 
  
  Let $\Omega$ be a small neighborhood of $u=1$ and $\Gamma=\{u\in \CC ~|~ |u|=1\}\setminus\Omega$. Let $M=\min_{u\in \Gamma}|\rho(u)|$. By the Cauchy–Hadamard theorem, we have \[ \lim_{n\to \infty} \left|\frac{\varphi_n(u)}{\varphi_n(1)}\right|^{-\frac{1}{n}} \ge \frac{M}{e-2}, \]
  uniformly for $u \in \Gamma$. Hence $\frac{\varphi_n(u)}{\varphi_n(1)}$ satisfies the uniform bound condition of Theorem \ref{thm:LLT} with $K=\frac{M}{e-2}$.

  Hence, for $k$ with $|k-\frac{n-3}{2}|=O(\sqrt{n})$, the normalized Betti numbers satisfy
  \beq\label{eq:bettiasymp} \frac{b_{k,n}}{\chi(\mzn)} \sim \frac{1}{\sqrt{2\pi}\sigma_n} \exp\left({-\frac{(k-m_n)^2}{2\sigma_n^2}}\right). \eeq
  Similarly, by the de Moivre-Laplace theorem, the approximation of binomial coefficients
  \[ \binom{n-3}{k} 2^{-(n-3)} \sim \frac{1}{\sqrt{2\pi s_n^2}} \exp\left({-\frac{(k-m_n)^2}{2 s_n^2}}\right), \]
  where $s_n^2 = \frac{n-3}{4}$, is valid for $|k-m_n|=o(n^{\frac{2}{3}})$. In particular, this includes the central region $|k-m_n|=O(\sqrt{n})$.

  Thus, asymptotically
  \[  \frac{b_{k,n}}{\binom{n-3}{k}^r} \sim C_n \exp\left( -\frac{(k-m_n)^2}{2} \left( \frac{1}{\sigma_n^2}-\frac{r}{s_n^2} \right) \right), \]
 where $C_n$ is a positive constant depending only on $n$. This sequence (in $k$) is log-concave if and only if the coefficient of the quadratic term $(k-m_n)^2$ is negative, equivalently, the resulting distribution is a normal distribution with positive variance. This requires
  \[ \frac{1}{\sigma_n^2} - \frac{r}{s_n^2} > 0 \;\Leftrightarrow \; \frac{s_n^2}{\sigma_n^2} > r. \]
 Since $\sigma_n^2 \sim vn$ and $s_n^2 \sim \frac{n}{4}$, the condition is $$r < \frac{1}{4v}.$$ Using the value $v \approx 0.06537$, we have $\frac{1}{4v} \approx 3.82$. Thus, the condition holds for $r=3$.
\end{proof}

\begin{remark}
We note that earlier proofs of asymptotic log-concavity \cite{ACM, CKL2} are most effective in the tail regions of the Betti sequence, since they fix $k$ and let $n\to \infty$. Our approach provides a new verification that extends this property to the central region, where the index $k$ grows proportionally with $n$.
\end{remark}

A byproduct of the preceding proof is the asymptotic formula \eqref{eq:bettiasymp} for Betti numbers $b_{k,n}$ when $k$ varies with $n$. This complements the results of \cite{ACM}, where an asymptotic formula is obtained for fixed $k$. For instance, when $n$ is even, \eqref{eq:bettiasymp} reads
\[ \frac{b_{\frac{n-2}{2}, n+1}}{\chi(\mzno)} \sim  \frac{1}{\sqrt{2\pi\sigma_{n+1}^2 }}\sim  \frac{1}{\sqrt{\frac{3-e}{3(e-2)}\pi n}}.\]
By combining with asymptotic formula for $\chi(\mzno)$ (cf. \cite[IV. Theorem.4.2.1]{Manin}), we obtain
\[ b_{\frac{n-2}{2}, n+1}\sim \frac{1}{\sqrt{\frac{3-e}{3(e-2)}\pi}} \frac{1}{n}\left(\frac{n}{e^2-2e} \right)^{n-\frac{1}{2}}. \]

\section{Fulton-MacPherson space of $n$ points in $\PP^1$}\label{sec:FM}

Let $\PP^1[n]=\fmn$ denote the Fulton-MacPherson compactification of the configuration space of $n$ distinct ordered points in $\PP^1$. Let
\beq\label{2} \psi(z,u)=1+\sum_{n\ge 1}\frac{z^n}{n!} \sum_k u^k\dim H^{2k}(\PP^1[n]).\eeq

An explicit relation between $\psi$ and $\varphi$ is given in \cite[IV. (4.24)]{Manin}\footnote{In fact, a stronger version is proved in \cite{Manin}. For $m=1$, the solution $y^0$ of \cite[IV. (4.22)]{Manin} is $\varphi$ by \cite[IV. (4.7)]{Manin}. 
}:
\beq\label{4} \psi=(1+\varphi)^{u+1}.\eeq
Alternatively, \eqref{4} can also be derived from \cite[Proposition 6.8]{CKL1}\footnote{There is a typo in \cite[Proposition 6.8]{CKL1}. The $m-2$ on the right-hand side should be $m-1$. Note also that when passing from Frobenius characteristics to dimensions of representations, a combinatorial factor $\binom{n}{a}$ arises in the summation.}. Indeed, from that proposition, it is easy to see that
\beq\label{3}
\psi= (1+\varphi)(u^2 \varphi -u(u-1)z +1).
\eeq
Combining this with \eqref{eq:varphi} yields \eqref{4}.

By differentiating with respect to $z$, we find that
\beq\label{5}
\frac{\partial \psi}{\partial z}=(u+1)(1+\varphi)^u\frac{\partial \varphi}{\partial z}.\eeq
As the factor $(u+1)(1+\varphi)^u$ is nonzero near $u=1$, we find that 
\beq\label{6}\frac{\partial z}{\partial \psi}=0 \;\Leftrightarrow \;
\frac{\partial z}{\partial \varphi}=0 \;\Leftrightarrow \;
z=\rho(u),\; \varphi=\lambda(u), \; \psi=\gamma(u)\eeq
in a neighborhood of $u=1$, where
\beq\label{7}
\rho(u)=u^{\frac{1}{u-1}}-\frac{u+1}{u},\; \lambda(u)=u^{\frac{1}{u-1}}-1,\; \gamma(u)=u^{\frac{u+1}{u-1}}.\eeq
Moreover, we have
\beq\label{8}
\begin{split}
\frac{\partial^2z}{\partial \psi^2}|_{\psi=\gamma(u)}&=
\frac1{(u+1)(1+\lambda(u))^u}\frac{\partial^2 z}{\partial \varphi^2}\frac{\partial \varphi}{\partial \psi}|_{\psi=\gamma(u)}\\
&=-\frac{1}{(u+1)^2}u^{-\frac{u+2}{u-1}}
\end{split}
\eeq
where the last equality follows from \eqref{eq:phi''} and \eqref{4}.
By the Taylor expansion at \eqref{7}, we have
\beq\label{9}
z-\rho(u) =-\frac{u^{-\frac{u+2}{u-1}}}{2(u+1)^2}
(\psi-\gamma(u))^2+O\left(|\psi-\gamma(u)|^3\right).\eeq
Then we may apply \cite[Theorem VI.4]{FS} and proceed as in the case of $\varphi$ to find that
\beq \label{eq:FMasymp} \frac{[z^n]\psi(z,u)}{[z^n]\psi(z,1)}=
\frac{(u+1) u^\frac{u+2}{2(u-1)}}{2e\sqrt{e}} \left(\frac{\rho(u)}{e-2}\right)^{\frac12 -n}  \left(1+O(n^{-1})\right).\eeq
In the Appendix, we provide an alternative derivation of \eqref{eq:FMasymp} by establishing a simple relation between the Poincar\'e polynomials of $\mzn$ and $\PP^1[n]$. Now, by Theorem \ref{thm:QPT}, we have the following.
\begin{theorem}
 The Betti numbers of $\PP^1[n]$ are asymptotically normally distributed with the mean $\frac{n}{2}$ and the variance $$\frac{3-e}{6(e-2)}n+ \frac{15-7e}{24-12e} + O(n^{-1}).$$ The speed of convergence is $O(n^{-\frac12}).$
\end{theorem}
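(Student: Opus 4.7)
The plan is to apply the Quasi-Powers Theorem (Theorem \ref{thm:QPT}) directly to the asymptotic expansion \eqref{eq:FMasymp}. The natural decomposition is
$$A(u) = \frac{(u+1)\,u^{\frac{u+2}{2(u-1)}}}{2e\sqrt{e}}\left(\frac{\rho(u)}{e-2}\right)^{\frac12}, \qquad B(u) = \frac{e-2}{\rho(u)},$$
with $\beta_n = n$ and $\kappa_n = n$, so that $p_n(u) := [z^n]\psi(z,u)/[z^n]\psi(z,1) = A(u)B(u)^n\bigl(1+O(n^{-1})\bigr)$ uniformly in a fixed neighborhood of $u=1$.

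Before invoking the theorem I would verify the normalization $A(1)=B(1)=1$. Since $\lim_{u\to 1}u^{1/(u-1)}=e$, we have $\rho(1)=e-2$, which gives $B(1)=1$. For $A(1)$, the factor $u^{(u+2)/(2(u-1))}$ is of indeterminate form at $u=1$; writing $u=1+t$ and expanding, $\frac{u+2}{2(u-1)}\log u \to \frac32$, so $u^{(u+2)/(2(u-1))}\to e^{3/2}$ as $u\to 1$. Then $A(1)=\tfrac{2\cdot e^{3/2}}{2e\sqrt{e}}=1$. Both $A$ and $B$ are analytic in a neighborhood of $u=1$, since the apparent singularities of the exponential and the denominator $\rho(u)$ are removable there. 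Next I would extract the mean and variance. The function $B(u)$ here is identical to the one used in the proof of Theorem \ref{thm:mzn}, so I can reuse $\mathfrak{m}(B)=\tfrac12$ and $\mathfrak{v}(B)=\tfrac{3-e}{6(e-2)}\neq 0$; the latter is the required variability condition, and together with $\beta_n=n$ the expansion \eqref{eq:mv} already yields the leading $\tfrac{n}{2}$ and $\tfrac{3-e}{6(e-2)}n$ terms. It then remains to compute $\mathfrak{m}(A)$ and $\mathfrak{v}(A)$ by Taylor expanding $\log A(u)$ around $u=1$: substituting $u=1+t$ and expanding each of the factors $\tfrac{u+1}{2}$, $u^{(u+2)/(2(u-1))}/e^{3/2}$, and $(\rho(u)/(e-2))^{1/2}$ to order $t^2$, summing, and reading off the coefficients of $t$ and $t^2$ should deliver $\mathfrak{m}(A)=0$ and $\mathfrak{v}(A)=\tfrac{15-7e}{24-12e}$, matching the stated constants.

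The main obstacle is keeping the Taylor expansion of $\rho(u)$ around $u=1$ accurate enough to produce both a first-order quantity ($\mathfrak{m}(A)$) and a second-order quantity ($\mathfrak{v}(A)$). Concretely this requires expanding $u^{1/(u-1)}$ through order $t^3$, one order further than was strictly needed in the proof of Theorem \ref{thm:mzn}, and carefully combining it with the $(u+1)$ and $u^{(u+2)/(2(u-1))}$ expansions; sign and coefficient bookkeeping is the only real risk. Once these computations are in hand, the variability condition is already verified, and Theorem \ref{thm:QPT} immediately yields asymptotic normality with the stated mean and variance and speed of convergence $O(\kappa_n^{-1}+\beta_n^{-1/2})=O(n^{-1/2})$.
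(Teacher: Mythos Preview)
Your proposal is correct and follows essentially the same route as the paper: after deriving \eqref{eq:FMasymp}, the paper simply invokes Theorem~\ref{thm:QPT}, and you have written out precisely the identification of $A(u)$, $B(u)$, $\beta_n=\kappa_n=n$ that this application entails, together with the verifications of $A(1)=B(1)=1$, analyticity, and the variability condition. Your observation that $B(u)=(e-2)/\rho(u)$ coincides with the $B$ from Theorem~\ref{thm:mzn}, allowing $\mathfrak m(B)$ and $\mathfrak v(B)$ to be reused, is exactly the point; the only new work is the Taylor computation for $A$, which you correctly outline.
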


%
Since the variance grows linearly at the same rate $v$ as in the case of $\mzn$, the same argument as before yields the following.
\begin{corollary}
  The sequence of Betti numbers of $\PP^1[n]$ is asymptotically $3$-ultra-log-concave at $k$ for $|k-\frac{n}{2}|=O(\sqrt{n})$.
\end{corollary}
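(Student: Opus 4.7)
The plan is to reproduce the argument from the preceding corollary, since the only quantitative input is the asymptotic variance rate, and Section \ref{sec:FM} shows this rate equals $v=\frac{3-e}{6(e-2)}$ for $\PP^1[n]$ as well. First, I would establish a local limit theorem for the Betti numbers of $\PP^1[n]$. Setting $\psi_n(u)=[z^n]\psi(z,u)$, the singularity analysis in \eqref{eq:FMasymp} shows that the radius of convergence of $\sum_n \psi_n(u)z^n$ is $|\rho(u)|$, with the same $\rho(u)$ that governs $\varphi$. Since $|\rho(u)|$ attains its unique minimum on $|u|=1$ at $u=1$, for any small neighborhood $\Omega$ of $1$ we obtain
\[
\limsup_{n\to\infty}\left|\frac{\psi_n(u)}{\psi_n(1)}\right|^{1/n}\le \frac{e-2}{M},\qquad M:=\min_{|u|=1,\,u\notin\Omega}|\rho(u)|>e-2,
\]
uniformly in $u\in\{|u|=1\}\setminus\Omega$. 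Hence Theorem \ref{thm:LLT} applies and the normalized Betti numbers obey
\[
\frac{b_{k,n}}{\chi(\PP^1[n])}\sim \frac{1}{\sqrt{2\pi}\,\sigma_n}\exp\!\left(-\frac{(k-m_n)^2}{2\sigma_n^2}\right),
\]
with $m_n=\frac{n}{2}$ and $\sigma_n^2\sim vn$.

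Second, I would bring in the binomial benchmark. Since $\dim \PP^1[n]=n$, the natural comparison is with $\binom{n}{k}2^{-n}$, which by the de Moivre–Laplace theorem satisfies
\[
\binom{n}{k}2^{-n}\sim \frac{1}{\sqrt{2\pi s_n^2}}\exp\!\left(-\frac{(k-\tfrac{n}{2})^2}{2s_n^2}\right),\qquad s_n^2=\frac{n}{4},
\]
valid for $|k-\tfrac{n}{2}|=o(n^{2/3})$, and in particular throughout the central window $|k-\tfrac{n}{2}|=O(\sqrt{n})$ that concerns us.

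Third, taking the ratio and collecting the quadratic exponent, I obtain
\[
\frac{b_{k,n}}{\binom{n}{k}^r}\sim C_n\exp\!\left(-\frac{(k-m_n)^2}{2}\left(\frac{1}{\sigma_n^2}-\frac{r}{s_n^2}\right)\right),
\]
for a positive constant $C_n$ depending only on $n$. This one-parameter Gaussian family in $k$ is log-concave precisely when the coefficient of $(k-m_n)^2$ is negative, i.e.\ $r<s_n^2/\sigma_n^2\to 1/(4v)\approx 3.82$. Thus $r=3$ is admissible and $r$-ultra-log-concavity in the central region follows.

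The argument has no substantive new obstacle beyond the one in the proof for $\mzn$; the one thing to double-check carefully is the uniform bound $|\psi_n(u)/\psi_n(1)|\le K^{-n}$ on the circle $|u|=1$ outside $\Omega$. Because $\psi=(1+\varphi)^{u+1}$ by \eqref{4}, the singularity structure of $\psi$ and $\varphi$ coincide away from the zero locus of $u+1$, so the bound reduces to the corresponding statement already established for $\varphi$, with an extra elementary verification that the prefactor $(u+1)u^{(u+2)/(2(u-1))}$ in \eqref{eq:FMasymp} does not spoil the inequality on the truncated unit circle. Once this is checked, the conclusion is immediate.
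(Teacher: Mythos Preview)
Your proposal is correct and follows essentially the same approach as the paper. The paper does not give an explicit proof of this corollary; it simply states that ``since the variance grows linearly at the same rate $v$ as in the case of $\mzn$, the same argument as before yields the following,'' and your write-up is precisely a careful unpacking of that sentence, including the appropriate adjustment $s_n^2=\tfrac{n}{4}$ coming from $\dim\PP^1[n]=n$ and the observation that the radius of convergence is governed by the same $\rho(u)$.
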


\begin{figure}
    \centering
    \includegraphics[width=0.7\columnwidth]{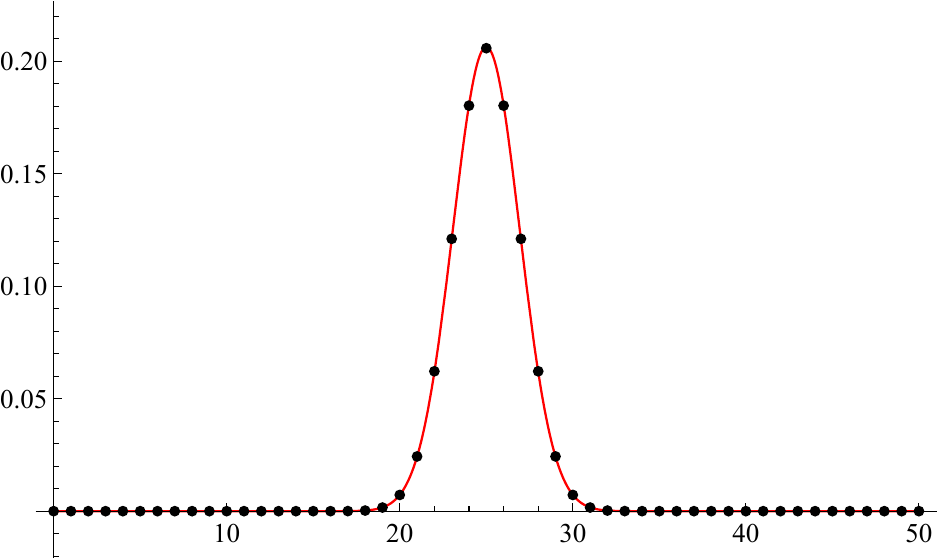}
    \caption{Betti distribution for $\PP^1[50]$}
    \label{fig:FM}
\end{figure}

Figure \ref{fig:FM} displays the Betti number distribution of $\PP^1[50]$ alongside the corresponding normal density, showing close agreement.

\section{More examples}
In this section, we present examples from geometry, some of which exhibit asymptotic normality and some of which do not. We aim to address the following question in future work.

\begin{question}
 Are there geometric conditions that ensure asymptotic normality?
\end{question}

As the asymptotic normality is often observed in combinatorial setting, we expect that it arises when the Poincar\'e polynomials of the geometric objects are governed by underlying combinatorial data. For example, Poincar\'e polynomials of $\mzno$ and $\mzno/\symS_n$ are encoded by the combinatorics of the weighted rooted trees (cf. \cite{CKL1, CKL2}).

\subsection{Spaces of rational curves with unordered marked points}

The symmetric group $\symS_n$ acts naturally on the moduli spaces $\mzn$, $\mzno$ and $\PP^1[n]$ by permuting the marked points (fixing the last marked point in the case of $\mzno$). The Betti numbers of the quotient spaces $\mzn/\symS_n$, $\mzno/\symS_n$ and $\PP^1[n]/\symS_n$ are determined by the multiplicities of the trivial representation in the $\symS_n$-representation on the corresponding cohomology (cf. \cite{CKL1}). Alternatively, without computing the full representations, they can also be obtained by evaluating the characteristic polynomial at $m=1$. We refer the reader to \cite{CKL3} for the definitions and algorithms used to compute these characteristic polynomials, and a brief overview is provided in the Appendix. We have carried out the computations for $n$ up to $72$.

\begin{figure}
    \centering
    \includegraphics[width=0.7\columnwidth]{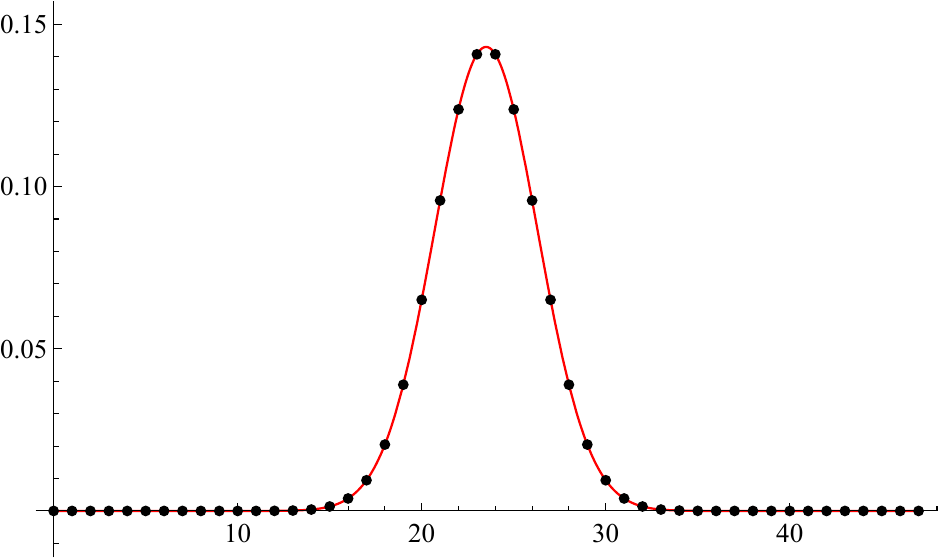}
    \caption{Betti distribution for $\overline{\cM}_{0,50}/\symS_{50}$}
    \label{fig:inv1}
\end{figure}
\begin{figure}
    \centering
    \includegraphics[width=0.7\columnwidth]{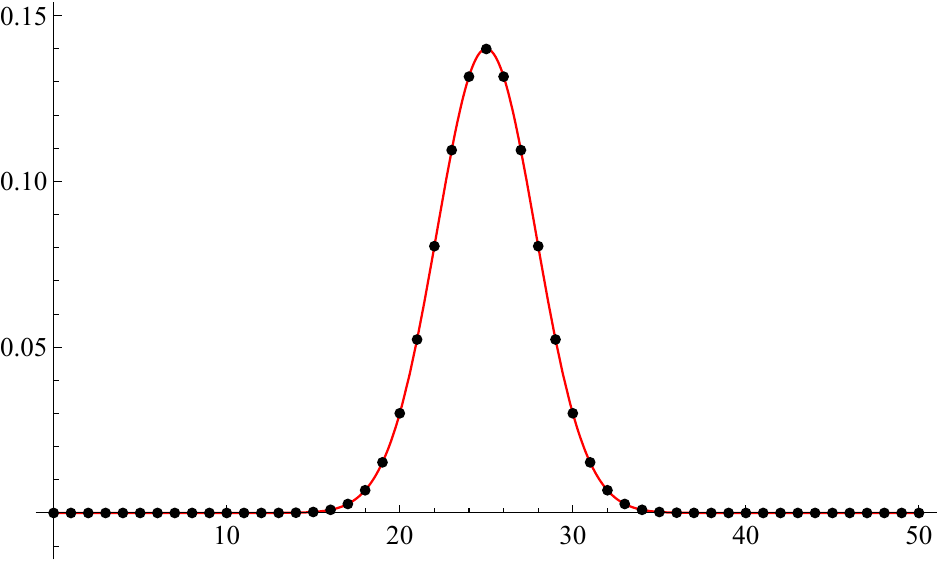}
    \caption{Betti distribution for $\PP^1[50]/\symS_{50}$}  
    \label{fig:inv2}
\end{figure}

As shown in \cite{CKL2}, the Betti numbers of $\mzn/\symS_n$ are asymptotically log-concave. This property was later extended to the characteristic polynomial in \cite{CKL3}, and it provides strong evidence for the asymptotic normality of the Betti numbers. Figures \ref{fig:inv1} and \ref{fig:inv2} show the normalized Betti numbers alongside the corresponding normal density function for $n=50$.

Furthermore, the variance of the Betti numbers exhibits linear growth with respect to $n$, which is similar to the ordered case we have seen in the previous sections. We present the values of the variance divided by $n$ in Table \ref{tab:variance}.

\begin{table}[htbp]
 \centering
  \begin{tabular}{cccc}

    $n$ & $\sigma^2(\mzn/\symS_{n})/n $ & $\sigma^2(\mzno/\symS_{n})/n $ & $\sigma^2(\PP^1[n]/\symS_{n})/n $    \\
    \hline
    10 & 0.1828947368  &0.1639097744 &  0.2066585956\\
    30 & 0.1593209245 &0.1554856327&  0.1700357511 \\
    50 &  0.1555469935 &0.1536360445& 0.1623732407 \\
    70 &  0.1541260042 &0.1528624719 & 0.1591027379 \\
    \hline
  \end{tabular}
  \caption{The normalized variance for the Betti numbers}
  \label{tab:variance}
\end{table}

Based on the numerical data, we propose the following conjecture.
\begin{conjecture}
    The Betti numbers of $\mzn/\symS_n$, $\mzno/\symS_n$ and $\PP^1[n]/\symS_{n}$ are asymptotically normally distributed and the variances associated with these three spaces grow linearly at the same rate.
\end{conjecture}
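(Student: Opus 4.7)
The plan is to mimic the approach of Theorem \ref{thm:mzn} and Section \ref{sec:FM}, replacing the scalar bivariate generating functions $\varphi(z,u)$ and $\psi(z,u)$ by their $\symS_n$-equivariant refinements. Concretely, I would introduce the Frobenius-characteristic generating function
\[ \Phi(z,u;p_1,p_2,\ldots) = \sum_{n \geq 2} \frac{z^n}{n!}\sum_{k} u^k\, \mathrm{ch}_{\symS_n}\bigl(H^{2k}(\mzno)\bigr), \]
and its analogs for $\mzn$ and $\PP^1[n]$, all expressed in the power-sum basis $p_k$. Since $P_{\mzno/\symS_n}(u)$ is obtained from the characteristic polynomial evaluated at $m=1$, equivalently by the principal specialization $p_k\mapsto 1$, the first step is to extract a closed implicit equation $G(z,\Xi,u)=0$ for the specialized series $\Xi(z,u)=\sum_n P_{\mzno/\symS_n}(u)\, z^n$. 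The functional equation for $\Phi$ underlying \cite[Proposition 6.8]{CKL1} should descend under $p_k\mapsto 1$ to such a $G$.

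With this implicit equation in hand, I would follow Section \ref{sec:mzn} step by step. The dominant singularity $z=\rho_*(u)$ is determined by the coupled system $G = \partial G/\partial \Xi = 0$; positivity of the coefficients and the Drmota--Lalley--Woods paradigm should guarantee that it is of square-root type. The Puiseux expansion analogous to \eqref{eq:phiexp}, combined with the transfer theorem \cite[Theorem VI.4]{FS}, then yields $[z^n]\Xi(z,u)\sim A_*(u)\rho_*(u)^{-n}n^{-3/2}$ uniformly in a neighborhood of $u=1$. Applying the Quasi-Powers Theorem \ref{thm:QPT} with $B(u)=\rho_*(1)/\rho_*(u)$ and $\beta_n=n$ delivers asymptotic normality, with linear variance growth rate $\mathfrak{v}(B)$. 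To obtain the claimed equality of the three variance rates, I would compare the three specialized implicit equations: the Frobenius characteristics of $H^*(\mzn)$, $H^*(\mzno)$, and $H^*(\PP^1[n])$ are linked by pointing (an index shift) and by the plethystic factor implicit in \eqref{4}, both of which should contribute only to the subleading amplitude $A_*(u)$ and leave $\rho_*(u)$, hence $\mathfrak{v}(B)$, unchanged.

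The main obstacle is the first step: extracting a tractable closed implicit equation after the principal specialization $p_k\mapsto 1$. The cycle-index functional equation for $\Phi$ involves plethystic composition, and the substitution $p_k\mapsto 1$ a priori couples infinitely many equations through the P\'olya cycle-index expansion. A promising strategy is to realize $\mzno/\symS_n$ as the unlabeled enumeration of the weighted rooted tree species identified in \cite{CKL1,CKL2}; the P\'olya theory of recursive species \cite[Ch.\ VII]{FS} then automatically supplies both a closed implicit equation and a square-root singularity, leaving only the careful tracking of the grading variable $u$ through the translation. Checking that the formula obtained for $\mathfrak{v}(B)$ matches the numerical limits observable in Table \ref{tab:variance} would provide a concrete consistency test.
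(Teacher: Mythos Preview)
The statement is a \emph{conjecture} in the paper; no complete proof is given. What the Appendix supplies is a conditional argument for the second clause only: using the characteristic-polynomial formalism of \cite{CKL3}, it derives the explicit identity
\[
\overline{\psi} \;=\; (1+\overline{\varphi})\bigl(u^{2}\overline{\varphi} - u(u-1)z + 1\bigr)
\]
between the \emph{ordinary} generating functions of $\PP^1[n]/\symS_n$ and $\mzno/\symS_n$ (note the absence of $n!$, unlike your $\Phi$), which is identical in form to \eqref{3}. Since $\overline{\psi}$ is then a polynomial in $\overline{\varphi}$ and $z$, any square-root singularity of $\overline{\varphi}$ at some $\bar\rho(u)$ transfers to $\overline{\psi}$ at the same location; together with the analogous link to $\mzn/\symS_n$ from \cite[Theorem~5.11]{CKL2}, this forces the three variance rates to coincide. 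But the existence of the square-root singularity for $\overline{\varphi}$ itself is explicitly left as an ``unproven expectation.''

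Your plan is in the same spirit and your diagnosis of the obstacle is exactly the paper's: under $p_k\mapsto 1$ the species recursion for $\overline{\varphi}$ couples $\overline{\varphi}(z^k,u^k)$ for all $k\ge 1$, so there is no finite implicit equation $G(z,\Xi,u)=0$ on which to run Drmota--Lalley--Woods directly. Where the paper goes further than your outline is in the \emph{relative} part: rather than arguing abstractly that pointing and the plethystic factor in \eqref{4} should leave $\rho_*(u)$ unchanged, it produces the concrete polynomial relation above, which settles the equality of variance rates modulo the single unproven input. For the \emph{absolute} part (asymptotic normality of any one of the three), you and the paper are at the same impasse; your suggestion to realize $\mzno/\symS_n$ via unlabeled weighted rooted trees and invoke P\'olya-style recursive species is the natural attack, but carrying the grading variable $u$ through the cycle-index substitutions is precisely where the available techniques stop, which is why the paper states this as a conjecture.
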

We expect that the arguments similar to those in Section \ref{sec:mzn} will apply. Assuming this, a proof demonstrating that the variances grow linearly at the same rate is provided in the Appendix. Determining the exact limiting value of the linear growth rate would be an interesting problem.


\subsection{Hilbert scheme of $n$ points on a surface}

Another important space in the moduli theory is the Hilbert scheme of $n$ points on a smooth projective surface $S$, denoted by $\mathrm{Hilb}^n(S)$. The Hilbert scheme is a smooth resolution of the symmetric product $\mathrm{Sym}^n(S)$. The Betti numbers of $\mathrm{Hilb}^n(S)$ are determined by G\"{o}ttsche's celebrated generating function formula \cite{Gottsche}. In contrast to $\mzn$, $\PP^1[n]$ and their quotients discussed above, we do not expect the Betti numbers of the Hilbert scheme to exhibit asymptotic normality. This is due to the analytic structure of its generating function, which prevents the application of the Quasi-Powers Theorem.

Let $S$ be either $\PP^2$ or $\PP^1\times \PP^1$ 
and let $b_i=\dim H^{2i}(S)$. Let $P_{\mathrm{Hilb}^n(S)}(u)=\sum_k u^k\dim H^{2k}(\mathrm{Hilb}^n(S))$ be the Poincar\'e polynomial. Recall that if the odd degree Betti numbers of $S$ vanish, the generating function $ \sum_{n=0}^\infty P_{\mathrm{Hilb}^n(S)}(u) z^n  $ is given by an infinite product:
\beq \label{eq:gottsche}
\prod_{m=1}^\infty \frac{1}{(1-u^{m-1}z^m)^{b_0}(1-u^{m}z^m)^{b_1}(1-u^{m+1}z^m)^{b_2}}. \eeq

To apply the Quasi-Powers Theorem via singularity analysis, the generating function is typically required to have an isolated singularity for $u$ near 1. (cf. \cite[Theorem VI.4]{FS}) However, due to the infinite product structure of \eqref{eq:gottsche}, the singularities accumulate, preventing the quasi-powers framework from being applicable.
Figures \ref{fig:hilb1} and \ref{fig:hilb2} plot the Betti distribution for $S=\PP^2$ and $S=\PP^1\times \PP^1$ against the normal density functions with the same mean and variance, which shows clear non-Gaussian behavior.
\begin{remark}
It is observed in \cite{HR} that the distribution of Betti numbers of $\mathrm{Hilb}^n(\CC^2)$ asymptotically converges to the \emph{Gumbel distribution}. In contrast to the normal distribution which models averages, the Gumbel distribution models extreme values (maximums) and consequently  it is asymmetric. This aligns with the fact that $\mathrm{Hilb}^n(\CC^2)$ does not satisfy Poincar\'{e} duality. It would be of interest to identify the precise limiting distribution for $S=\PP^2$ or $S=\PP^1\times \PP^1$.
\end{remark}

\begin{figure}
    \centering
    \includegraphics[width=0.7\columnwidth]{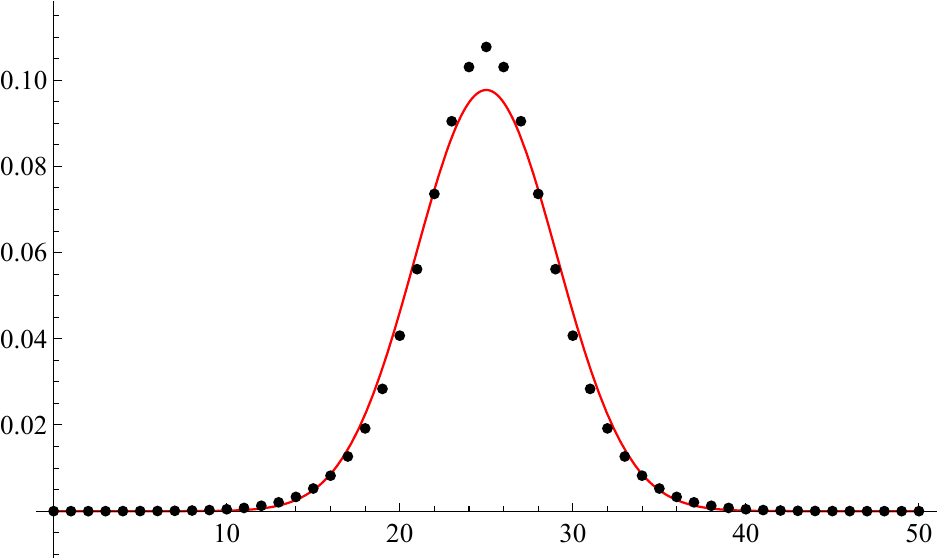}
    \caption{Betti distribution for $\mathrm{Hilb}^{25}(\PP^2)$}
    \label{fig:hilb1}
\end{figure}
\begin{figure}
    \centering
    \includegraphics[width=0.7\columnwidth]{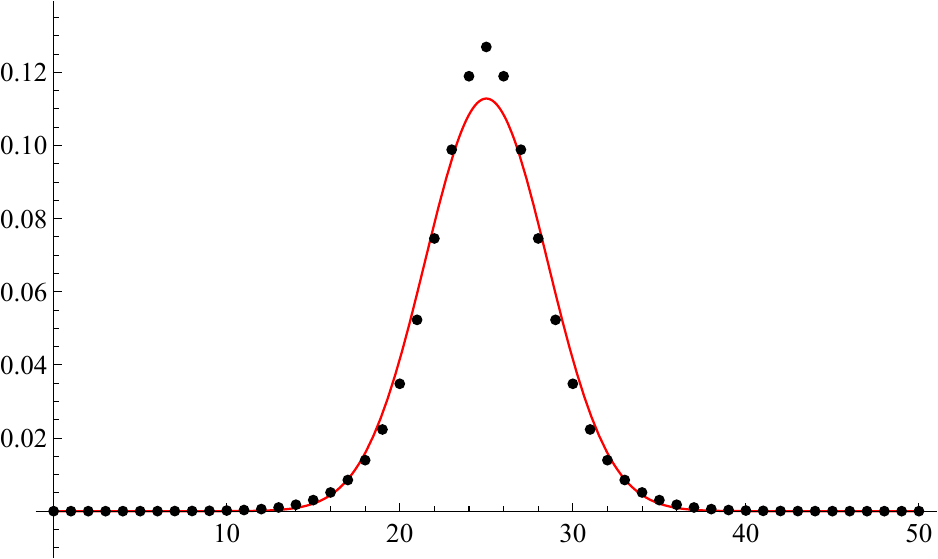}
    \caption{Betti distribution for $\mathrm{Hilb}^{25}(\PP^1\times \PP^1)$}
    \label{fig:hilb2}
\end{figure}

\subsection{GIT quotient of $(\PP^1)^n$}

Let $$Y_n=(\PP^1)^{n}/\!\!/\SL_2(\C)$$ be the GIT quotient (cf.~\cite{MFK}) of $(\PP^1)^n$ by the diagonal $\SL_2(\C)$-action with respect to the linearization $\sO_{(\PP^1)^n}(1,\cdots, 1)$. In this section, we restrict our attention to odd $n$ so that all semistable points are stable and that $Y_n$ is a smooth projective variety. As demonstrated in \cite{KM}, $\mzn$ is related to $Y_n$ by a sequence of blowups
\beq\label{eq:blowup} \mzn \to \cdots \to Y_n.  \eeq
Motivated by this connection, we explore whether the asymptotic normality observed in $\mzn$ persists for the Betti numbers of $Y_n$. The Betti numbers of $Y_n$ for odd $n$ are computed in \cite[Example~5.18]{Kir}:
\[
\ \sum_k u^k\dim H^{2k}(Y_n)= \sum_{k=0}^{n-3} t^k \left( \sum_{j=0}^{\min(k, n-3-k)}\binom{n-1}{j} \right).
\]

We observe that this distribution is not asymptotically normal. The Betti numbers are given by a partial sum of binomial coefficients. Since the sequence of binomial coefficients approaches a normal density function, their partial sums approximate the cumulative distribution function, while the symmetry is imposed by the condition on the summation. Figure \ref{fig:GIT} plots the Betti distribution for $Y_{51}$ against the normal density function with the same mean and variance, illustrating the clear deviation from Gaussian behavior.

\begin{figure}
    \centering
    \includegraphics[width=0.7\columnwidth]{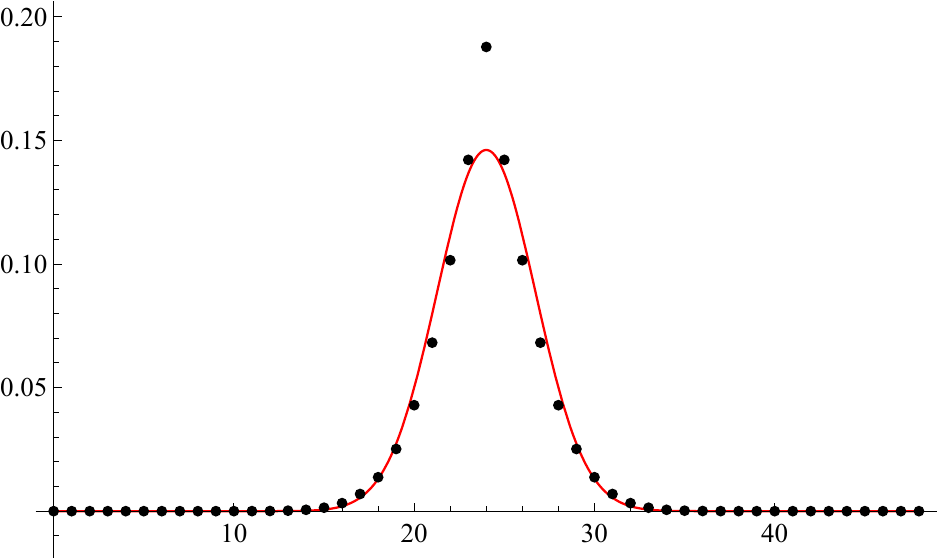}
    \caption{Betti distribution for $(\PP^1)^{51}/\!\!/\SL_2(\C)$}
    \label{fig:GIT}
\end{figure}

Since $\mzn$ is obtained from $Y_n$ via the sequence of blowups \eqref{eq:blowup}, it would be interesting to find out at which stage the asymptotic normality emerges and to identify the geometric reason for this transition. We leave this question for future investigation.

\section*{Appendix}
In the Appendix, we present a proof of the identities relating the Poincar\'e polynomials of $\mzn$, $\mzno$, $\PP^1[n]$, and their quotient spaces.
For a space $X_n$ equipped with an action of the symmetric group $\symS_n$, we define
\[ P_{X_n} =\sum_{k=0}^{\dim X_n} \dim H^{2k}(X_n)t^k, \ \cP_{X_n} =\sum_{k=0}^{\dim X_n} \ch_{\symS_n} H^{2k}(X_n)t^k
\]
to be the Poincar\'{e} polynomial of $X_n$ and the graded Frobenius characteristic of the $\symS_n$-representations on the cohomology of $X_n$, respectively.

Let \[\fp_{X_n}=\sum_{k=0}^{\dim X_n} \dim H^{2k}(X_n/\symS_n)t^k \] denote the Poincar\'{e} polynomial of the quotient $X_n/\symS_n$. This polynomial can be obtained by extracting the multiplicities of the trivial representation from $\cP_{X_n}$. In \cite{CKL3}, we define and study the characteristic polynomial $\stan_V(m)$ of a $\symS_n$-representation $V$, which is obtained by applying the \emph{Stanley map}
\beq \label{eq:stan} x_1=x_2=\cdots=x_m=1 \text{ and } x_{m+1}=x_{m+2}=\cdots = 0\eeq
to the Frobenius characteristic $\ch_{\symS_n} V$, where the variables $x_i$ correspond to the power-sum variables. Then $\stan_V(m)$ is a polynomial in $m$ of degree $n$ and $\stan_V(1)$ is the dimension of the invariant subspace $V^{\symS_n}$ (\cite[Lemma 3.10]{CKL3}).

Define the characteristic polynomial $\charP_{X_n}$ of $X_n$ by applying the Stanley map \eqref{eq:stan} to $\cP_{X_n}$. In particular, $\fp_{X_n}$ is obtained by evaluating $\charP_{X_n}$ at $m=1$.

In this note, we let $X_n$ be one of $\mzn$, $\mzno$ or $\PP^1[n]$, where the action of $\symS_n$ is given by permuting $n$ points (fixing the last marked point in the case of $\mzno$). We denote
\[P_n=P_{\mzn},  \ \ Q_n=P_{\mzno},\ \  P^{FM}_n=P_{\PP^1[n]}\]
and use the analogous notations for $\cP$, $\charP$ and $\fp$. We adopt the convention that these polynomials are zero whenever the index is non-integer.

In \cite{CKL2}, we study the relationship among the cohomologies of these spaces via quasi-map wall crossing. By \cite[Theorem 4.8]{CKL2}, we have
\beq\label{eq:QP} \cQ_n = (1+t)\cP_n + t \sfs_{(1,1)}\circ \cQ_{\frac{n}{2}} + t \sum_{h=2}^\ell \cQ_h \cQ_{n-h},
\eeq
where $\ell = \lfloor \frac{n-1}{2}\rfloor$, $\sfs_\lambda$ is the Schur function associated to the partition $\lambda$ and $\circ$ denotes the plethysm product.

There is an analogous formula for $\PP^1[n]$ (cf. Propositions 4.1 and 4.4 for $m=2$ of \cite{CKL2}):
\beq\label{eq:FM}
\begin{split}
\cP_{\PP^1[n]} &= (1+t+t^2+t^3)\cP_n + t \sfs_{(1,1)}\circ \left( (1+t) \cQ_{\frac{n}{2}}\right)\\
& + (t+t^2) \sum_{h=2}^\ell (1+t)\cQ_h \cQ_{n-h} + t (1+t)\sfs_1 \cQ_{n-1}.
\end{split}
\eeq
Since
\[
\sfs_{(1,1)}\circ \left( (1+t) \cQ_{\frac{n}{2}}\right)= \sfs_{(1,1)}\circ\cQ_{\frac{n}{2}} + t^2 \sfs_{(1,1)}\circ\cQ_{\frac{n}{2}}+t \cQ_{\frac{n}{2}}^2,
\]
by combining \eqref{eq:QP} and \eqref{eq:FM}, we find
\beq\label{eq:FMrep}
\begin{split}
 \cP_{\PP^1[n]} & =   (1+t)^2\cQ_n -2t(1+t)\cP_n \\ & \hspace{2ex}-2t^2 \sfs_{(1,1)}\circ  \cQ_{\frac{n}{2}} +
t^2 \cQ_{\frac{n}{2}}^2 + t(1+t) \sfs_1 \cQ_{n-1}.
\end{split}
\eeq

Consequently, we obtain a formula for the characteristic polynomials. Note that the characteristic polynomial of $\sfs_{(1,1)}\circ  \cQ_{\frac{n}{2}}$ is $\frac{1}{2} (\charQ_{\frac{n}{2}}^2 -\charQ_{\frac{n}{2}}^{[2]} )$, where the superscript $[2]$ denotes the substitution $t\mapsto t^2$. Thus,
\begin{align}
\charP_{\PP^1[n]} &=  (1+t)^2\charQ_n -2t(1+t)\charP_n + t(1+t) m \charQ_{n-1}+ t^2
\charQ_{\frac{n}{2}}^{[2]} \label{eq:FMchar} \\
&= (1+t^2)\charQ_n + t(1+t)m \charQ_{n-1}+ t^2\sum_{h=2}^{n-2}
\charQ_{h}\charQ_{n-h}  \label{eq:FMchar2}
\end{align}
The second equality follows from \cite[Corollary 5.12]{CKL1}:
\[(1+t)\charP_n=\charQ_n - \frac{1}{2}t \sum_{h=2}^{n-2}
\charQ_{h}\charQ_{n-h} + \frac{1}{2}t \charQ_{\frac{n}{2}}^{[2]},\]
which can be derived from \eqref{eq:QP}.
Extracting the coefficients of $\frac{1}{n!}m^n$ from \eqref{eq:FMchar} yields a relation between the Poincar\'e polynomials:
\begin{align*}
P_{\PP^1[n]} & =  (1+t)^2 Q_n -2t(1+t) P_n  + nt(1+t) Q_{n-1} \\
&= (1+t)^2 P_{n+1} +(n-2)t(1+t) P_n .
\end{align*}
In terms of notations in Section \ref{sec:FM}, this gives
\beq \label{eq:fmmzn}\psi_n = (1+u)^2\varphi_n + \frac{n-2}{n}u(1+u) \varphi_{n-1}. \eeq
Consequently, the asymptotic formula \eqref{eq:FMasymp} can be recovered from \eqref{eq:fmmzn} using \eqref{eq:varphiasymp}.

Furthermore, setting $m=1$ in \eqref{eq:FMchar2} yields an identity between the Poincar\'e polynomials of the quotient spaces, which can be expressed in terms of ordinary generating functions.

Let
\begin{align*} \overline{\psi}(z,u)&=1+\sum_{n\ge 1} z^n \sum_k u^k\dim H^{2k}(\PP^1[n]/\symS_n), \\
 \overline{\varphi}(z,u)&=z+\sum_{n\ge 2} z^n \sum_k u^k\dim H^{2k}(\mzno/\symS_n).\end{align*}
Note that we do not divide by $n!$. Then by \eqref{eq:FMchar2}, we obtain
\beq
\overline{\psi}= (1+\overline{\varphi})(u^2 \overline{\varphi} -u(u-1)z +1).
\eeq
Interestingly, the equation obtained here matches \eqref{3} exactly.

We expect that an argument analogous to that of Section \ref{sec:mzn} should apply to $\overline{\varphi}$. Under this unproven expectation, we expect
\[\overline{\varphi} \approx \bar\lambda(u) +\bar\mu(u) \left(1-\frac{z}{\bar\rho(u)}\right)^\frac12. \]
Since $ \overline{\psi}$ is a polynomial in $\overline{\varphi}$ and $z$, $ \overline{\psi}$ is also of the form
\[\overline{\psi} \approx A(u) +B(u) \left(1-\frac{z}{\bar\rho(u)}\right)^\frac12, \]
for some analytic functions $A$, $B$ of $u$ in a neighborhood of $u=1$. If we further assume $B(1)\ne 0$, then the Quasi-Powers Theorem should tell us that
the asymptotic distributions of $[z^n]\overline{\varphi}$ and $[z^n]\overline{\psi}$ are normal distributions whose variances are linear with the same coefficient of $n$ (determined by $\bar\rho$).

Moreover, there is an identity between the generating functions for the Poincar\'{e} polynomials of $\mzn/\symS_n$ and $\mzno/\symS_n$. See \cite[Theorem 5.11]{CKL2}, where a slightly different notation is used. In their notation, $\mathfrak{q}$ corresponds to our $\overline{\varphi}+1$. Nevertheless, the analogous arguments apply, and we therefore expect that the asymptotic distributions of their Betti numbers are normal with variances growing linearly at the same rate.

\bibliographystyle{amsplain}

\end{document}